%
\documentclass{amsart}
\usepackage{amssymb}
\usepackage{amsmath}
\usepackage{hyperref}
\usepackage{color}
\textwidth 6.2in \textheight8.5in \oddsidemargin0.00in
\evensidemargin0.00in
\newcommand{\bt}{\begin{theorem}}
\newcommand{\et}{\end{theorem}}
\newcommand{\bi}{\begin{itemize}}
\newcommand{\ei}{\end{itemize}}
\newcommand{\bea}{\begin{eqnarray}}
\newcommand{\ba}{\begin{array}}
\newcommand{\eea}{\end{eqnarray}}
\newcommand{\ea}{\end{array}}
\newcommand{\be}{\begin{equation}}
\newcommand{\ee}{\end{equation}}

%
\newcommand{\what}{\widehat}%
\newcommand{\wtilde}{\widetilde}%
\newcommand{\R}{\mathbb R}%
\newcommand{\C}{\mathbb C}%
\newcommand{\T}{\mathbb T}%
\newcommand{\N}{\mathbb N}%

\newtheorem{theorem}{Theorem}[section]

\newtheorem{lemma}[theorem]{Lemma}
\newtheorem{proposition}[theorem]{Proposition}

\theoremstyle{definition}

\theoremstyle{definition}
\newtheorem{remark}[theorem]{Remark}

\numberwithin{equation}{subsection}
\numberwithin{theorem}{subsection}

\sloppy

\begin{document}

\author[S. K. Ray]{Swagato K. Ray }
\address[S. K. Ray]{Department of Math. and Stat., Indian Institute of Technology, Kanpur 208016, India}
\email{skray@iitk.ac.in}

\author[R. P. Sarkar]{Rudra P. Sarkar}
\address[R. P. Sarkar]{Stat-Math Unit, Indian Statistical
Institute, 203 B. T. Rd., Calcutta 700108, India}
\email{rudra@isical.ac.in}

\subjclass[2000]{Primary 43A85; Secondary 22E30} \keywords{Wiener
class, groups of polynomial growth, $\mathrm{SL}(2, \R)$, symmetric
spaces.}

\title[Note on Kerman-Weit]{Note on a result of Kerman and Weit}
\maketitle

\begin{abstract} A result in \cite{Ker-Weit} states that a real
valued continuous function $f$ on the circle and its nonnegative integral powers
can generate a dense translation invariant subspace in the space of all continuous
functions on the circle if $f$ has a unique maximum or a unique minimum.
In this note we endeavour to show that this is quite a general phenomenon in harmonic analysis.
 \end{abstract}
\section{Introduction}
A recent paper of Kerman and Weit \cite{Ker-Weit} states that if a
continuous real valued function $f$ on the circle group $\T$ has
unique maximum or unique minimum, then the span of translations of
$f^n$ ($f^n(x)=(f(x))^n$), $n=0, 1, \dots$ is dense in the space of
all continuous functions  on $\T$. This note grew out as an effort
to understand the universality of this phenomenon. We bring into
fold a large class of groups and some  homogenous spaces to
establish that the argument in \cite{Ker-Weit} is robust enough to
be used {\em mutatis mutandis} in all these cases,  to yield similar
results involving $C_0$, $L^1$ and $L^p$ spaces. Precisely, we shall
consider the groups of Wiener class  (which includes all
locally compact abelian groups, compact topological groups, groups
of polynomial growth, $ax+b$ group etc.), as well as the group
$\mathrm{SL}(2, \R)$ which is not in the Wiener class and is of
exponential growth. We shall also take up  the Riemannian symmetric
spaces of compact and noncompact types.
\subsection{Notation} We shall use $\ell_x,
R_x$ to denote left and right translation by $x$ on a group $G$. For
a complex number $z$, its real and imaginary parts are denoted by
$\Re z$ and $\Im z$ respectively. For a set $A$ in a measure space
$|A|$ denotes its measure. The set of positive integers will be denoted by $\N$. For a
topological space $X$,  $C_c(X)$ (respectively $C_0(X)$) denotes the  space of continuous functions with compact support (respectively continuous functions vanishing at $\infty$ with supremum norm topology). For a group $G$, we denote its unitary dual by $\what{G}$.
\section{Analogues of the result of Kerman and Weit}
\subsection{The key lemma} To make it applicable to a wider context, in particular to accommodate the
noncompact spaces, we shall rewrite  the main argument of
\cite{Ker-Weit} with suitable modifications (see Lemma
\ref{meta-result} below). We notice that when the space $X$ is
noncompact, the function $f^0\equiv 1$ is not in $L^p(X)$ or in
$C_0(X)$. Therefore unlike \cite{Ker-Weit}, we shall consider only
the positive integral powers of $f$. Following \cite{Ker-Weit}, for
any $\lambda, \lambda_1, \lambda_2\in \R$, we define
$A_\lambda=\{x\mid f(x) \ge \lambda\}$ and $A_{\lambda_1,
\lambda_2}=\{x\mid \lambda_1< f(x)<\lambda_2\}$. We shall use the
following simple observation: If
 $T$ is a continuous function on an interval $[a, b]$ containing $0$ and
 $T(0)=0$, then there is a sequence of polynomials without constant terms $\{p_j\}$ such that
$\|p_j-T\|_{L^\infty[a, b]}\to 0, \text{  as } j\to \infty.$ Indeed
by Weierstrass approximation theorem we get a sequence $\{q_j\}$ of
polynomials such that $\|q_j-T\|_{L^\infty[a, b]}\to 0$ as $j\to
\infty$. Then in particular $c_j=q_j(0)\to T(0)=0$. Let
$p_j=q_j-c_j$. Then $p_j$ are without constant terms and
$\|p_j-T\|_{L^\infty[a, b]}=\|q_j-c_j-T\|_{L^\infty[a, b]}\le
\|q_j-T\|_{L^\infty[a, b]}+|c_j|\to 0$ as $j\to \infty$.
\begin{lemma} Let $X$ be a locally compact Hausdorff topological space equipped with a regular nonatomic Borel  measure $\mu$.
Let $f:X\to \R$ be a continuous function with compact support $S$,
such that  $f$ attains its maximum (or minimum) only at one point
say $x_0\in X$, which is not a isolated point of $X$.  If $\psi:
X\to \C$ is a continuous function and $\psi(x_0)\neq 0$, then there
exists a positive integer $n$ such that $\int_Xf^n(x)
g(x)d\mu(x)\neq 0$. \label{meta-result}
\end{lemma}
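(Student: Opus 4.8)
My plan is to argue by contradiction. Suppose $\int_X f^n\psi\,d\mu=0$ for every $n\in\N$; I will use this to contradict $\psi(x_0)\neq 0$. Replacing $f$ by $-f$ if necessary (note $(-f)^n=(-1)^nf^n$, so the vanishing hypothesis is preserved), I may assume $f$ attains its \emph{maximum} only at $x_0$; write $M=f(x_0)=\max_X f$. Since $\mu$ is a regular Borel measure, $\mu(S)<\infty$; as $f$ vanishes off $S$, the integral $\int_Xf^n\psi\,d\mu$ is really taken over the compact set $S$, where $f$ and $\psi$ are bounded, so all quantities in sight are finite. Fix a compact interval $[a,b]\supseteq f(X)\cup\{0\}$. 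The first step is to upgrade the hypothesis from powers of $f$ to all continuous functions of $f$ vanishing at $0$: given continuous $T\colon[a,b]\to\C$ with $T(0)=0$, the observation recorded just before the lemma produces polynomials $p_j$ without constant term with $p_j\to T$ uniformly on $[a,b]$; then $\int_Xp_j(f)\psi\,d\mu$ is a finite linear combination of the vanishing integrals $\int_Xf^n\psi\,d\mu$ $(n\ge 1)$, hence $0$, while $|\int_X(p_j(f)-T(f))\psi\,d\mu|\le\|p_j-T\|_{L^\infty[a,b]}\cdot\|\psi\|_{L^\infty(S)}\cdot\mu(S)\to 0$. Therefore $\int_XT(f)\psi\,d\mu=0$.

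Next I localize near $x_0$. By continuity of $\psi$, fix a neighbourhood $V$ of $x_0$ with $|\psi(x)-\psi(x_0)|<\tfrac12|\psi(x_0)|$ for $x\in V$. For $\lambda$ near $M$ the super-level set $\{f\ge\lambda\}$ is compact: if $M>0$, take $\lambda>0$ so that $\{f\ge\lambda\}\subseteq S$; if $M\le 0$, then $X$ itself is compact (otherwise $f\equiv0$ on the non-empty open set $X\setminus S$ would force the maximum value $0$ to be attained at more than one point), so $\{f\ge\lambda\}$, being closed, is compact. These sets decrease to $\{f\ge M\}=\{x_0\}\subseteq V$, so by the finite intersection property $\{f\ge\lambda^*\}\subseteq V$ for some $\lambda^*<M$, which I take positive when $M>0$. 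Put $\phi(\lambda)=\mu(\{f>\lambda\})$ for $\lambda\in[\lambda^*,M)$: it is finite, non-increasing, and $\phi(\lambda)\to\mu(\{x_0\})=0$ as $\lambda\to M^-$ because $\mu$ is non-atomic; meanwhile $\phi(\lambda^*)>0$ since $\{f>\lambda^*\}$ is a non-empty open neighbourhood of $x_0$, and every neighbourhood of $x_0$ has positive $\mu$-measure (this is where the hypothesis that $x_0$ is not an isolated point of $X$ is used, together with the fact — valid in all the settings of this paper — that $\operatorname{supp}\mu=X$). Hence there is $t$ with $\lambda^*<t<M$ and $\phi(t)<\phi(\lambda^*)$. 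Set $E=\{x\in X\colon \lambda^*\le f(x)\le t\}$. Then $\mu(E)\ge\mu(\{\lambda^*<f\le t\})=\phi(\lambda^*)-\phi(t)>0$, while $E\subseteq\{f\ge\lambda^*\}\subseteq V$ and $f(E)\subseteq[\lambda^*,t]$, a compact interval \emph{not containing} $0$ (because $\lambda^*>0$ if $M>0$, and $t<M\le0$ if $M\le0$).

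To conclude, apply the first step to continuous functions $R_\delta\colon[a,b]\to[0,1]$ equal to $1$ on $[\lambda^*,t]$ and vanishing outside a $\delta$-neighbourhood of $[\lambda^*,t]$; for $\delta<\operatorname{dist}(0,[\lambda^*,t])$ one has $R_\delta(0)=0$, so $\int_XR_\delta(f)\psi\,d\mu=0$, and letting $\delta\to0$ with dominated convergence (the integrand is supported in a fixed compact level set of finite measure, and $R_\delta(f)\downarrow\mathbf{1}_{\{\lambda^*\le f\le t\}}$) gives $\int_E\psi\,d\mu=0$. But since $E\subseteq V$,
\[
0=\Bigl|\int_E\psi\,d\mu\Bigr|\ \ge\ |\psi(x_0)|\,\mu(E)-\int_E|\psi-\psi(x_0)|\,d\mu\ >\ \tfrac12|\psi(x_0)|\,\mu(E)\ >\ 0,
\]
a contradiction. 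For the minimum case one applies the above to $-f$, using $(-f)^n=(-1)^nf^n$.

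The step I expect to be the crux is the localization in the second paragraph: one needs a "shell" $E$ that is simultaneously small enough that $\psi$ is nearly constant on it and yet carries positive $\mu$-mass, and these demands pull against each other, since small neighbourhoods of $x_0$ force the level window $[\lambda^*,t]$ up against $M$. Ruling out that $\mu$ charges no such window is precisely the combined role of non-atomicity at $x_0$ (so that $\phi$ has limit $0$ at $M$) and of $x_0$ lying in the support of $\mu$ (so that $\phi$ starts out positive). The other ingredients — the polynomial approximation of the first step, the passage from a super-level set to the closed shell $\{\lambda^*\le f\le t\}$, and the dominated-convergence bookkeeping — should be routine.
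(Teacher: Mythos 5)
Your proof is correct, and it runs on the same fuel as the paper's: Weierstrass approximation by polynomials without constant term, level sets of $f$ accumulating at $x_0$, continuity of measure, non-atomicity at $x_0$, and continuity of $\psi$ there. The organization, however, is genuinely different. The paper first proves (its Steps 1--2), independently of the vanishing hypothesis, that some super-level set $A_\lambda=\{f\ge\lambda\}$ satisfies $\int_{A_\lambda}\psi\,d\mu\neq 0$, then builds a plateau function $T$ equal to $1$ on $[\lambda,1]$ and controls the error coming from the transition band $A_{\lambda-\delta,\lambda}$ to get $\int_X T(f)\psi\,d\mu\neq 0$, and only at the end invokes the polynomial approximation to finish by contradiction. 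You invert this: you front-load the Weierstrass step to conclude $\int_X T(f)\psi\,d\mu=0$ for \emph{every} continuous $T$ with $T(0)=0$, and then exhibit a single thin shell $E=\{\lambda^*\le f\le t\}$ of positive measure inside the neighbourhood where $\psi$ stays within $\tfrac12|\psi(x_0)|$ of $\psi(x_0)$, on which $\int_E\psi\,d\mu$ must be both zero and nonzero. Your route buys two small things: it dispenses with the paper's somewhat delicate Step 1 (the telescoping identity involving $A_{1-\varepsilon}$, $A_1$ and $f^{-1}(1-\varepsilon)$), and it explicitly covers the case $\max f\le 0$, which can occur when $X=S$ is compact (e.g.\ $f=-2+\cos$ on $\T$) and in which the paper's assertion ``$\max f>0$'' and the ensuing normalization $f(x_0)=1$ do not apply. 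Finally, both arguments rely on the unstated hypothesis that nonempty open neighbourhoods of $x_0$ carry positive $\mu$-measure --- the paper's ``$A_{1-\varepsilon,1}\neq\emptyset$ and hence $\mu(A_{1-\varepsilon,1})>0$'' is exactly your $\phi(\lambda^*)>0$ --- so your flagging of this point is not a defect of your proof relative to the paper's; the hypothesis does hold in every application the paper makes of the lemma.
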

\begin{proof}
If $f$ has a unique minimum then $-f$ has a unique maximum. So it is
enough to consider the case that $f$ has a unique maximum. Since
maximum of $f$ is unique and $f$ is zero outside a compact set,
$\max f>0$. Normalizing we assume  that  $f(x_0)=1$. We fix a
$0<\eta<1$.  Then there is an open neighborhood $N_1$ of $x_0$ such
that  $f(x)>\eta$ for all $x\in N_1$.
\vspace{.1in}

\noindent{\em Step 1.}  We shall show in this step that there is a
$\lambda$ such that $\eta<\lambda<1$ and
$\int_{A_\lambda}\psi(x)dx\neq 0$. We start supposing that the
conclusion is false. Assume without loss of generality that $\Re
\psi(x_0)>0$. We take another open neighborhood $N_2$ of $x_0$ such
that for all $x\in N_2$, $\Re\, \psi(x)>0$. We define $N=N_1\cap
N_2$ which is an open neighbourhood of $x_0$.

Consider the compact set $S\setminus N=S\cap N^c$. Suppose $\max
f=\lambda_1$ in  the compact set $S\setminus N$. Then $\lambda_1<1$,
since maximum of $f$ is achieved only at $x_0$.

We choose $\varepsilon >0$ such that $\max\{\eta,
\lambda_1\}<1-\varepsilon$. Then $A_{1-\varepsilon, 1}=\{x\mid
1-\varepsilon<f(x)<1\}\subset N$. (Indeed $x\in A_{1-\varepsilon,
1}$ implies $f(x)>0$ and hence $x\in S$. If $x\not\in N$ then $x\in
S\setminus N$. Hence $f(x)\le \lambda_1<1-\varepsilon$ which
contradicts the fact that $x\in A_{1-\varepsilon, 1}$.) Since $x_0$
is not an isolated point $A_{1-\varepsilon, 1}\neq \emptyset$ and
hence $\mu(A_{1-\varepsilon, 1})> 0$. Since $\Re \psi(x)>0$ on $N$,
we have $\int_{A_{1-\varepsilon, 1}}\Re\, \psi(x)\,\, dx>0$. Now
\[\int_{A_{1-\varepsilon,1}}\psi(x)dx=
\int_{A_{1-\varepsilon}}\psi(x)dx-\int_{A_1}\psi(x)-\int_{f^{-1}(1-\varepsilon)}\psi(x)dx
=\int_{A_{1-\varepsilon}}\psi(x)dx
-\int_{f^{-1}(1-\varepsilon)}\psi(x)dx
\]
and \[\int_{f^{-1}(1-\varepsilon)}\psi(x)dx=\lim_{\mu\to
(1-\varepsilon)}\left(\int_{A_{1-\varepsilon}}\psi(x)dx-\int_{A_\mu}\psi(x)dx\right).\]
The assumption that for all $\lambda$ such that $\eta<\lambda<1$,
$\int_{A_\lambda}\psi(x)dx=0$ then shows that
$\int_{A_{1-\varepsilon,1}}\psi(x)dx=0$ which is a contradiction.
\vspace{.1in}

\noindent{\em Step 2.} By step 1  there is a $\lambda$ satisfying
$0<\eta<\lambda<1$ and $\int_{A_\lambda}\psi(x)dx\neq 0$. We
fix that $\lambda$. Let us assume that $\Re
\int_{A_\lambda}\psi(x)dx=q>0$. Let $b=\max_{x\in S} |\Re
\psi(x)|$. We choose a $0<\delta<1$ such that $\lambda-\delta>0$ and
$|A_{\lambda-\delta, \lambda}|<q/2b$.

Let $m=\min f$ (which can be negative). We consider a function $T\in
C[m,2]$ supported on $[\lambda-\delta, 1+\delta]$, $\max T=1$ and
$T(x)=1$ for $\lambda\le x\le 1$. We note that $T(0)=0$ as
$0<\lambda-\delta$. Then the function $T(f)(x)=T(f(x))$  is a
continuous function on $X$ and is supported on $S$.
 Now, \[\Re \int_X
T(f)(x)\psi(x)dx=\int_{A_{\lambda-\delta, \lambda}} T(f)(x)\Re
\psi(x)\, dx +\int_{A_\lambda} T(f)(x)\Re \psi(x)dx.\] Since
$|T(f)|\le 1$, absolute value of the first term in the right hand
side is $\le |A_{\lambda-\delta, \lambda}| b <q/2$. Since
$T(f)\equiv 1$ on $A_\lambda$,  the second term is equal to
$\int_{A_\lambda} \Re \psi(x)dx=q$. Hence $\Re \int_X
T(f)(x)\psi(x)dx>q/2$. \vspace{.1in}

\noindent{\em Step 3.} We shall show that there is $n\in \N$ such
that $\int_X f^n(x)\psi(x)dx\neq 0$. Suppose that $\int_X
f^n(x)\psi(x)dx=0$ for all $n\in \N$. Then for any polynomial $p$ in
one variable without constant term $p(f)$ is a continuous function
on $X$ supported on $S$ (as $p(0)=0$) and $\int_X p(f)(x)\psi(x)dx=0$.
By the observation above, there is a sequence $p_j$ of polynomials
without constant term such that $\|p_j-T\|_{L^\infty[m,2]}\to 0$ as
 $j\to \infty$ and hence as $j\to \infty$,
\[
|\int_X p_j(f)(x)\psi(x)dx-\int_X T(f)(x)\psi(x)dx| \le
C\|p_j(f)-T(f)\|_{L^\infty(S)} \le C\sup_{y\in [m, 2]}|p_j(y)-T(y)|
\to 0,
\]
where $C=\int_S|\psi(x)|dx$. This  implies $\int_X
T(f)(x)\psi(x)dx=0$.
\end{proof}

We shall use the lemma above to prove analogues of the result of
Kerman and Weit on various topological groups and homogenous spaces.
Groups where Wiener Tauberian theorem holds true seem to be the first natural target.
\subsection{Groups of Wiener class}
A locally compact group $G$ equipped with a left Haar measure  is in
Wiener class ([W]-class for short) if every proper two-sided ideal
in the Banach$^\ast$-algebra
 $L^1(G)$ is annihilated by a nondegenerate continuous $^\ast$-representation
 $\pi$ of $L^1(G)$ in a Hilbert space (see \cite{Lep}). Thus if $G$ is of [W]-class then a family of functions $\{f_\alpha\}$
 in $L^1(G)$ can generate a dense two-sided ideal in $L^1(G)$ if and only if for any $\pi\in \what{G}$,
 there exists $f_\alpha$ in that collection such that $\what{f_\alpha}(\pi)=\int_G f_\alpha(x)\pi(x)dx\neq 0$.
We recall  that the closure in $L^1(G)$ of the two-sided ideal generated by  $\{f_\alpha\}$ is same as the closure of the span of left and right translates of $\{f_\alpha\}$.

\begin{theorem}
Let $G$ be a group of class {\em [W]}. Let $f\in C_c(G)$ be a real valued
function which has a unique maximum or a unique minimum at a point $x_0\in
G$. Then the both-sided ideal generated by $\{f^n\mid n\in \N\}$ is
dense in $L^1(G)$.
\end{theorem}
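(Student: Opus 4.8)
The plan is to reduce the theorem to the characterization of dense two-sided ideals in $L^1(G)$ available for groups of class [W], and then to verify the nonvanishing condition pointwise on $\what G$ using Lemma \ref{meta-result}. Concretely, since $G$ is of class [W], the closure of the two-sided ideal generated by $\{f^n\mid n\in\N\}$ equals $L^1(G)$ if and only if for every $\pi\in\what G$ there exists $n\in\N$ with $\what{f^n}(\pi)=\int_G f^n(x)\pi(x)\,dx\neq 0$ (as an operator on the Hilbert space of $\pi$). So it suffices to fix $\pi\in\what G$ and produce such an $n$.

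First I would note that $f^n\in C_c(G)\subset L^1(G)$ for every $n$, since $f$ is continuous with compact support $S$ and $f^n$ is supported in $S$ as well; thus $\what{f^n}(\pi)$ is a well-defined bounded operator. To say $\what{f^n}(\pi)\neq 0$ is to say there are unit vectors $u,v$ in the representation space with $\langle \what{f^n}(\pi)u,v\rangle=\int_G f^n(x)\langle\pi(x)u,v\rangle\,dx\neq 0$. Setting $\psi(x)=\langle\pi(x)u,v\rangle$ — a continuous (indeed bounded) function on $G$ — this is exactly $\int_G f^n(x)\psi(x)\,dx\neq 0$, which is the conclusion of Lemma \ref{meta-result} applied to the space $X=G$ with $\mu$ a Haar measure.

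The remaining point is to arrange the hypotheses of Lemma \ref{meta-result}: Haar measure on a locally compact group is regular and Borel, and it is nonatomic precisely when $G$ is not discrete; the maximizing point $x_0$ is not isolated under the same condition. If $G$ is nondiscrete, pick $u=v$ a unit vector with $\psi(x_0)=\langle\pi(x_0)u,u\rangle\neq 0$ — this is possible because $\pi(x_0)$ is a unitary operator, so it cannot have all diagonal entries zero (e.g. take $u$ so that $\langle\pi(x_0)u,u\rangle$ has modulus close to $\|\pi(x_0)u\|=1$, or simply note $\mathrm{tr}$-type positivity of $\pi(e)=I$ plus continuity) — and apply Lemma \ref{meta-result} to get the desired $n$. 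If $G$ is discrete, Haar measure is counting measure and the argument via the lemma fails, but then one can argue directly: $x_0$ is an isolated point, $f$ attains a strict extremum there, and $f^n(x_0)/f^n(y)\to\infty$ (resp. $0$) for $y\neq x_0$ in $S$, so $\what{f^n}(\pi)\to$ a nonzero multiple of $\pi(x_0)$ after normalization; alternatively one observes $C_c(G)=C_c(G)$ and handles the discrete case by a short separate remark. I would present the nondiscrete case as the main line and dispatch the discrete case briefly.

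The main obstacle I anticipate is the bookkeeping around $\psi$: one must choose the matrix coefficient $\psi(x)=\langle\pi(x)u,v\rangle$ so that simultaneously $\psi$ is continuous on all of $G$ (automatic for matrix coefficients of a continuous unitary representation), $\psi(x_0)\neq 0$ (needs the small argument that a unitary operator has a nonzero diagonal entry), and the conclusion $\int_G f^n\psi\neq 0$ genuinely forces $\what{f^n}(\pi)\neq 0$ (immediate, since a nonzero value of one matrix coefficient of an operator means the operator is nonzero). None of these is deep, but they are exactly the bridge between the analytic Lemma \ref{meta-result} and the representation-theoretic statement, so the proof is essentially: unwind the definition of [W]-class, pass to a matrix coefficient, quote Lemma \ref{meta-result}. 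I would also remark that the translates-of-$f^n$ formulation follows because the closure of the two-sided ideal generated by $\{f^n\}$ coincides with the closed span of left and right translates of the $f^n$, as recalled just before the theorem.
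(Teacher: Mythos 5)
Your proposal is correct and follows essentially the same route as the paper: reduce via the [W]-class characterization of dense two-sided ideals to finding, for each $\pi\in\what{G}$, some $n$ with $\what{f^n}(\pi)\neq 0$, then choose a matrix coefficient $\psi(x)=\langle\pi(x)u,v\rangle$ with $\psi(x_0)\neq 0$ (possible since $\pi(x_0)$ is unitary) and apply Lemma \ref{meta-result}, treating the discrete case by a separate brief remark exactly as the paper does.
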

\begin{proof} We first observe that if $G$ is discrete then Lemma
\ref{meta-result} is not applicable. However it does not take much
effort to prove an analogue independently for discrete groups.

Since $G$ is of class [W], it suffices to show that for any $\pi\in
\what{G}$, there exists $n\in \N$ and $u, v\in H_\pi$ such that
$\what{f^n}(\pi)_{u, v}=\langle \what{f^n}(\pi)u,
v\rangle=\int_Gf^n(x)\langle \pi(x)u, v\rangle d\mu(x)\neq 0.$
Since the operator $\pi(x_0)$ is unitary, we can choose $u, v\in
H_\pi$ such that $\langle \pi(x_0)u, v\rangle\neq 0$.
We define $\psi(x)=\langle \pi(x)u, v\rangle$  and  use Lemma
\ref{meta-result}.
\end{proof}

\begin{remark}
If  $G$ is a locally compact  abelian group   then in the theorem
above, $L^1(G)$ can be replaced by $L^p(G), 1<p<\infty$ or by
$C_0(G)$. We sketch the argument below for $L^p(G)$. We fix a $p\in
[1, \infty)$. Let $p'=p/(p-1)$. If a function $h\in L^{p'}(G)$
satisfies $\int_{G}\ell_x f^n(y)h(y)dy=0$ for all $n\in \N$ and all
$x\in G$ then $f^n\ast h\equiv 0$ for all $n\in \N$. Let
$\{\phi_m\}$ be an approximate identity in $L^1(G)\cap L^\infty(G)$.
Then $h\ast \phi_m\in L^{p'}(G)\cap L^\infty(G)$ and $f^n\ast(h\ast
\phi_m)\equiv 0$ for all $n\in \N$. Since by the theorem above
$\mathrm{span }\{\ell_x f^n\mid n\in \N, x\in G\}$ is dense in
$L^1(G)$, we have $h\ast \phi_m=0$ for all $m$ and hence $h=0$.
\end{remark}

\subsection{A group of exponential growth} For this subsection $G=\mathrm{SL}(2, \R)$. Our aim here is the following.
\begin{theorem} Let $f\in C_c(G)$ be a real valued
function which has a unique maximum or a unique minimum at a point
$x_0\in G$. Then  $\mathrm{span }\{\ell_xR_yf^n\mid x,y\in G, n\in
\N\}$ is dense in $L^p(G), 1\le p<2$. \label{sl2}
\end{theorem}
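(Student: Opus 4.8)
The strategy is to mimic the [W]-class argument, but since $\mathrm{SL}(2,\R)$ is not of Wiener class we must replace the abstract Tauberian input by the explicit Wiener Tauberian theorem available on $\mathrm{SL}(2,\R)$ for $L^p$, $1\le p<2$. The plan is as follows. First I would recall the Kunze--Stein phenomenon: for $1\le p<2$, $L^1(G)\ast L^p(G)\subseteq L^p(G)$, so that $L^p(G)$ is a module over the (two-sided) convolution algebra $L^1(G)$, and more importantly that there is a genuine Wiener Tauberian theorem in this range (Ben Natan--Benyamini--Weit, or Sitaram, and the refinements for $K$-bi-invariant and general functions): a collection $\{g_\alpha\}\subset L^1(G)$ generates a dense two-sided ideal in $L^1(G)$, equivalently $\mathrm{span}\{\ell_x R_y g_\alpha\}$ is dense in $L^p(G)$ for $1\le p<2$, provided the ``not-too-fast decay'' condition holds and provided that for every relevant representation $\pi$ in the appropriate part of the unitary dual, some $\what{g_\alpha}(\pi)\neq 0$. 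Since each $f^n\in C_c(G)\subset L^1(G)$ automatically satisfies any decay hypothesis, the whole matter reduces, exactly as in Theorem 2.2, to a non-vanishing statement for matrix coefficients.

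Second, I would carry out this reduction. Fix $\pi$ in the part of $\what{G}$ required by the Tauberian theorem (the tempered spectrum: principal and, for the relevant $p$, complementary series up to the appropriate parameter). For such $\pi$ the operator $\pi(x_0)$ is unitary, hence nonzero, so we may pick $u,v\in H_\pi$ with $\langle\pi(x_0)u,v\rangle\neq 0$. Set $\psi(x)=\langle\pi(x)u,v\rangle$, a continuous function on $G$ with $\psi(x_0)\neq 0$ and, since $x_0$ is never an isolated point of $G$ (a connected Lie group), Lemma \ref{meta-result} applies with $X=G$, $\mu$ Haar measure: there exists $n\in\N$ with $\int_G f^n(x)\psi(x)\,d\mu(x)\neq 0$, i.e. $\langle\what{f^n}(\pi)u,v\rangle\neq 0$, so $\what{f^n}(\pi)\neq 0$. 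This holds for every $\pi$ in the required set, and therefore by the Wiener Tauberian theorem on $\mathrm{SL}(2,\R)$ the two-sided ideal generated by $\{f^n\mid n\in\N\}$ is dense in $L^1(G)$; the Kunze--Stein-based density transfer (as in the Remark after Theorem 2.2, now using left \emph{and} right translates and the $L^1$-$L^p$ convolution inclusion together with an approximate identity argument) upgrades this to density of $\mathrm{span}\{\ell_x R_y f^n\}$ in $L^p(G)$ for $1\le p<2$.

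The main obstacle is entirely on the harmonic-analysis side rather than in the Lemma: one must invoke the correct form of the Wiener Tauberian theorem for $\mathrm{SL}(2,\R)$ valid on all of $L^p$, $1\le p<2$ (not merely for $K$-bi-invariant functions), and check that the spectral non-vanishing condition it demands is precisely $\what{f^n}(\pi)\neq 0$ for $\pi$ ranging over the tempered dual, with no extra ``boundary'' conditions of the kind that obstruct the Tauberian theorem at $p=2$. The restriction $p<2$ is exactly what makes the Kunze--Stein phenomenon and the $L^p$-Tauberian theorem available; at $p=2$ the statement genuinely fails. A secondary point to handle carefully is that $f^n$ for large $n$ may have very small $L^1$-norm but this is harmless since $C_c(G)$ functions trivially meet any growth/decay hypotheses, and that passing from ``dense two-sided ideal in $L^1$'' to ``dense span of two-sided translates in $L^p$'' requires the approximate-identity truncation argument to stay inside $L^{p'}\cap L^\infty$ so that Kunze--Stein applies.
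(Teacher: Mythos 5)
There is a genuine gap, and it sits exactly at the point where $\mathrm{SL}(2,\R)$ differs from the Wiener class. You reduce the theorem to the non-vanishing of $\what{f^n}(\pi)$ for $\pi$ ranging over the \emph{unitary} tempered dual (principal and complementary series), and you justify the choice of $u,v$ with $\langle\pi(x_0)u,v\rangle\neq 0$ by the unitarity of $\pi(x_0)$. But the whole reason the group fails to be of class [W] (Ehrenpreis--Mautner) is that non-vanishing of Fourier transforms on the unitary dual is \emph{not} sufficient for density, not even of a two-sided ideal in $L^1(G)$. The Tauberian theorem the paper actually invokes (a corollary of \cite[Theorem 1.2]{sarkar}) requires the Fourier transforms of the family to have no common zero on the set of irreducible $L^q$-tempered representations for some $q<p$, and this set contains, in addition to the unitary ones, certain \emph{nonunitary} principal series representations (spectral parameter in a strip off the unitary axis). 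Your explicit claim that the condition to check involves ``no extra boundary conditions'' beyond the tempered dual is precisely backwards: those extra representations are the obstruction that makes $\mathrm{SL}(2,\R)$ interesting here. Relatedly, your proposed route --- first deduce that $\{f^n\}$ generates a dense two-sided ideal in $L^1(G)$ from unitary-dual non-vanishing, then transfer to $L^p$ by Kunze--Stein --- cannot work as stated, because the first step is exactly the false $L^1$ Wiener Tauberian theorem for this group.

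The gap is repairable with a small change that brings you in line with the paper's proof: run the same matrix-coefficient argument over all $L^q$-tempered representations, including the nonunitary principal series. For such $\pi$ the operator $\pi(x_0)$ is no longer unitary, but it is still a bounded invertible (in particular nonzero) operator on a Hilbert space, so one can still choose $v,w$ with $\psi(x)=\langle\pi(x)v,w\rangle$ continuous and $\psi(x_0)\neq 0$, and Lemma \ref{meta-result} applies verbatim since $x_0$ is not isolated in the connected group $G$. Your handling of the decay hypotheses for $f^n\in C_c(G)$ is fine and matches the paper.
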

We recall  that $G$  is a prototype of the noncompact semisimple Lie
group.  Since the seminal work of Ehrenpreis and Mautner \cite{EM},
it is known  that this group is not in the [W]-class. However a
modified version of the Wiener's Tauberian theorem for $L^p(G), 1\le
p<2$  is available in the literature (see \cite{EM, BW, sarkar} and
the references therein). The main difference is that for
$\mathrm{SL}(2, \R)$ (or generally for noncompact semisimple Lie
groups) one cannot rely only on the unitary dual of the group. We
state here a version of the theorem suitable for our present need.
This is indeed a corollary of \cite[Theorem 1.2]{sarkar} which   can
be verified easily.
 \begin{proposition} Fix $1\le p<2$ and choose a $q<p$.
 Let $\mathcal L\subset C_c(G)$ be a collection of functions such that
 their Fourier transforms with respect to the irreducible $L^q$-tempered
 representations
 do not have a common zero. Then $\mathrm{span }\{\ell_xR_yf\mid x,y\in G, f\in \mathcal L\}$ is
dense in $L^p(G)$.
 \end{proposition}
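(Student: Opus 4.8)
The plan is to deduce this directly from the Wiener Tauberian theorem \cite[Theorem 1.2]{sarkar}, so that the real work is only to recognize that the present data meet its hypotheses. First I would recall the precise form of that theorem: for the chosen exponents it gives a sufficient condition under which a family of functions lying in a suitable subspace of $L^1(G)$ generates, under left and right convolution, a dense subspace of $L^p(G)$, the condition being (a) that each generator satisfies the integrability and regularity requirements imposed there, and (b) that the associated operator-valued Fourier transforms, evaluated over the irreducible representations tempered relative to $L^q$ (equivalently, those whose continuous parameter lies in the strip prescribed by the exponent $q$), have no common zero. Since our hypothesis is phrased verbatim as (b), the entire task reduces to verifying (a) for functions in $C_c(G)$ and matching the two descriptions of the $L^q$-tempered dual.

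To settle (a): because $\mathcal L\subset C_c(G)$, every $f\in\mathcal L$ lies in $L^1(G)\cap L^r(G)$ for all $r$, and by the Paley--Wiener theorem on $G$ its Fourier transform extends holomorphically in the principal-series parameter to the whole plane; in particular $\what f(\pi)$ is a well-defined bounded operator for each $\pi$ in the $L^q$-tempered dual. The compact support forces the decay side conditions of \cite{sarkar} automatically, since a compactly supported continuous function decays faster than any exponential and therefore sits inside whatever weighted-$L^1$ or Schwartz-type class the theorem demands. This is exactly the point at which ``verified easily'' becomes literal: one simply quotes the Paley--Wiener estimates to place $C_c(G)$ inside the admissible class.

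The role of the auxiliary exponent is worth isolating. The choice $q<p$ makes $\{\,|\Im\lambda|\le 2/q-1\,\}$ a strictly larger strip than the $L^p$-tempered one, so demanding non-vanishing over the $L^q$-tempered dual is a genuinely stronger requirement than non-vanishing over the $L^p$-tempered dual; it is precisely this surplus that compensates, in the spirit of Ehrenpreis--Mautner, for the rapid decay of compactly supported generators, and thereby removes any Tauberian ``non-decay'' hypothesis that a sharp $L^p$-formulation would otherwise have to impose (and which $C_c(G)$ would violate). Granting (a) and (b), \cite[Theorem 1.2]{sarkar} yields that $\mathrm{span}\{\ell_xR_yf\mid x,y\in G,\ f\in\mathcal L\}$ is dense in $L^p(G)$.

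The step I expect to be the main obstacle is not analytic but a matter of bookkeeping and of the family-versus-single-generator passage. If \cite[Theorem 1.2]{sarkar} is stated for one generator, one must deduce density of the whole span from the no-common-zero condition. The clean route is the annihilator: were the span proper in $L^p(G)$, there would be a nonzero $h\in L^{p'}(G)$ annihilating every $\ell_xR_yf$, which by unimodularity of $G$ is the two-sided convolution relation $\int_G f(w)\,h(xwy)\,dw=0$ for all $x,y\in G$ and all $f\in\mathcal L$; the ideal-theoretic content of \cite{sarkar} then forces all these $\what f$ to vanish simultaneously at a single $L^q$-tempered representation determined by $h$, contradicting (b). Confirming that this reduction is exactly licensed by the theorem's hypotheses — in particular that a proper closed bi-invariant subspace is annihilated by one such representation — together with reconciling the precise parametrization of the $L^q$-tempered dual in \cite{sarkar} with the one used here, is the only point that genuinely requires care.
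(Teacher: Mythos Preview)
Your approach is exactly the paper's: the proposition is stated there as ``indeed a corollary of \cite[Theorem 1.2]{sarkar} which can be verified easily,'' with no further argument given. Your proposal simply fills in that verification (that $C_c(G)$ sits in the admissible class and that the $q<p$ choice matches the strip hypothesis), so it is correct and follows the same route; the family-versus-single-generator concern you flag is not an obstacle, as the cited theorem is already formulated for collections.
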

 We recall that for $0<p\le 2$, a representation is  $L^p$-tempered  when   the Fourier
 transform  of a function in the  $L^p$-Schwartz space  on $G$ exists
 with respect to this representation. $0<p<2$, depending on $p$,
 the set of $L^p$-tempered representations are subsets of the union of the following two sets of representations:
 (1) unitary principal series, discrete and mock discrete series
 representations and the trivial representation, (2) certain nonunitary
 principal series representations. For a detailed account we refer to \cite{Ba, sarkar}. Presently we only need
 the fact that all these representations
 (except the trivial one) are infinite dimensional and realized on separable  Hilbert spaces.
In view of  proposition above it suffices to show that  the operator
valued Fourier transforms of the functions $f^n, n\in \N$ with
respect to these representations do not have a common zero.  For any
such representation $(\pi, H_\pi)$, we look for a matrix coefficient
$\psi(x)=\langle\pi(x)v, w\rangle$ such that $\psi(x_0)\neq 0$. An
applciation of Lemma \ref{meta-result} with this $\psi$ proves
Theorem \ref{sl2}.

\subsection{Riemannian Symmetric spaces} In this subsection we shall
deal with the Riemannian Symmetric spaces of compact and noncompact type in
that order.

Let $X=G/K$ be a Riemannian symmetric space of compact type where
$G$ is a compact connected semisimple Lie group and $K$ be the fixed
point group of an involutive automorphism on $G$. Then functions on
$X$ can be realized as right $K$-invariant functions on $G$ and
vice-versa. For $(\delta, V_\delta)\in \what{G}$, let
$V_\delta^K=\{v\in V_\delta\mid \delta(k)v=v\,\,\, \forall k\in
K\}$. We recall that $\dim V_\delta^K\le 1$ (see
\cite[p.412]{Helga-GGA}). Let $\what{G}_K=\{\delta\in \what{G}\mid
\dim V_\delta^K=1\}$. It is easy to verify  that if $f$ is a
continuous complex valued function on $X$ (equivalently if $f$ is a
right $K$-invariant function on $G$), then $\what{f}(\delta)=0$ if
$\delta\not\in \what{G}_K$. Given a $\delta\in\what{G}_K$, let
$e_0\in V_\delta^K$ and $\{e_0, e_1, \dots, e_{d(\delta)}\}$ be an
orthonormal basis of $V_\delta$, where $d(\delta)=\dim V_\delta$.
Using Weyl's orthoganility relation it can be shown that $\langle
\what{f}(\delta)e_i, e_j\rangle=\int_G f(x)\langle\delta(x)e_i,
e_j\rangle dx =0$ unless $i=0$. Similarly, if $f$ is a left
$K$-invariant function on $G$ then $\langle \what{f}(\delta)e_i,
e_j\rangle =0$ unless $j=0$. It is easy to see that the above
conclusions also apply to complex measures on $G$.
\begin{theorem}
Let $X=G/K$ be a  Riemannian symmetric space of compact type and $f$
be a a real valued continuous function which has a unique maximum or
a unique minimum at $x_0\in X$. Then $\mathrm{span }\{\ell_x f^n\mid
n\in \N, x\in G\}$ is dense in $C(X)$.
\end{theorem}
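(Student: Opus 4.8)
The plan is to argue by duality. By the Riesz representation theorem the dual of $C(X)$ is the space of complex regular Borel measures on $X$, so it suffices to prove that if such a measure $\nu$ satisfies $\int_X(\ell_xf^n)(y)\,d\nu(y)=0$ for every $n\in\N$ and every $x\in G$, then $\nu=0$. Realizing functions on $X$ as right $K$-invariant functions on $G$, let $F_n$ be the lift of $f^n$ and let $\wtilde\nu$ be the (right $K$-invariant) measure on $G$ corresponding to $\nu$; since $F_n$ is right $K$-invariant the hypothesis reads $\int_GF_n(x^{-1}y)\,d\wtilde\nu(y)=0$ for all $x\in G$. Replacing $x$ by $x^{-1}$ and pushing $\wtilde\nu$ forward under $y\mapsto y^{-1}$ to a left $K$-invariant measure $\mu_0$ on $G$, this becomes the statement that the convolution $F_n\ast\mu_0$ vanishes identically on $G$. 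Taking the operator valued Fourier transform on the compact group $G$ one gets $\what{F_n}(\delta)\,\what{\mu_0}(\delta)=0$ for every $\delta\in\what G$ and every $n\in\N$.

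Next I would use the $K$-invariance exactly as in the discussion preceding the theorem. Fix $\delta\in\what G_K$, a unit vector $e_0\in V_\delta^K$ and an orthonormal basis $e_0,e_1,\dots,e_{d(\delta)}$ of $V_\delta$. Right $K$-invariance of $F_n$ gives $\langle\what{F_n}(\delta)e_i,e_j\rangle=0$ for $i\neq0$, so $\what{F_n}(\delta)$ is the rank $\le1$ operator $u\mapsto\langle u,e_0\rangle\,\what{F_n}(\delta)e_0$; dually, left $K$-invariance of $\mu_0$ forces $\what{\mu_0}(\delta)v\in V_\delta^K=\C e_0$ for every $v$. Feeding these into $\what{F_n}(\delta)\what{\mu_0}(\delta)=0$ yields $\langle\what{\mu_0}(\delta)v,e_0\rangle\,\what{F_n}(\delta)e_0=0$ for all $v$. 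Hence, \emph{provided} there is some $n$ with $\what{F_n}(\delta)e_0\neq0$, we conclude $\what{\mu_0}(\delta)=0$. Since moreover $\what{\mu_0}(\delta)=0$ automatically whenever $\delta\notin\what G_K$ (there $\dim V_\delta^K=0$), Fourier uniqueness on $G$ then gives $\mu_0=0$, i.e. $\wtilde\nu=0$ and $\nu=0$.

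The one substantial point is producing, for each $\delta\in\what G_K$, an $n$ with $\what{F_n}(\delta)\neq0$, and this is precisely where Lemma \ref{meta-result} is used. Since $\delta(x_0)$ is unitary and $e_0\neq0$, there is an index $\ell$ with $\langle\delta(x_0)e_0,e_\ell\rangle\neq0$; set $\psi(x)=\langle\delta(x)e_0,e_\ell\rangle$. Because $e_0$ is $K$-fixed, $\psi$ is right $K$-invariant, hence descends to a continuous function on $X$ with $\psi(x_0)\neq0$. Since $X$ is a compact manifold of positive dimension, its $G$-invariant measure $\mu$ is regular and nonatomic, $x_0$ is not isolated, and $f$ — being continuous on the compact space $X$ — has compact support and attains its extremum only at $x_0$; thus Lemma \ref{meta-result} furnishes $n\in\N$ with $\int_Xf^n(x)\psi(x)\,d\mu(x)\neq0$. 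As the Haar measure of $G$ projects to $\mu$, this integral equals $\langle\what{F_n}(\delta)e_0,e_\ell\rangle$, so $\what{F_n}(\delta)\neq0$, completing the proof (the trivial representation is the case $\ell=0$, $\psi\equiv1$). I expect the main obstacle to be bookkeeping rather than anything conceptual: one must keep track of left versus right translates and of the side on which $K$-invariance acts, and of the reflection $y\mapsto y^{-1}$ needed to turn the translation hypothesis into a genuine convolution, so that the two complementary one-sided vanishing statements for $\what{F_n}(\delta)$ and $\what{\mu_0}(\delta)$ match up and force the operator product to vanish only trivially.
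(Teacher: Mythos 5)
Your proposal is correct and follows essentially the same route as the paper's own proof: dualize to an annihilating measure, turn the hypothesis into the convolution identity $f^n\ast\wtilde{\mu}\equiv 0$, exploit the rank-one structure of $\what{f^n}(\delta)$ and $\what{\wtilde{\mu}}(\delta)$ coming from right/left $K$-invariance, and invoke Lemma \ref{meta-result} with the matrix coefficient $\psi(x)=\langle\delta(x)e_0,e_\ell\rangle$. The only difference is organizational (you prove $\what{\mu_0}(\delta)=0$ for every $\delta$ and appeal to Fourier uniqueness, while the paper argues by contradiction at a single $\delta$ where the transform of the measure is nonzero), which is not a substantive divergence.
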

\begin{proof}
Let $\mu$ be a nonzero complex right $K$-invariant measure on $G$,
such that $\mu(\ell_xf^n)=0$ for all $x\in G$ and for all $n\in \N$.
Then $f^n\ast \wtilde{\mu}\equiv 0$ where $\wtilde{\mu}(g)=\int
g(x^{-1})d\mu(x)$ for all $g\in C(G/K)$. It is clear that
$\wtilde{\mu}$ is left $K$-invariant. Consequently, from the
discussion above it follows that for all $n\in \N$,  $e_i, e_j\in
V_\delta$ and $\delta\in \what{G}_K$,
\[\langle(f^n\ast\wtilde{\mu})\what{}(\delta)e_i, e_j\rangle=\langle\what{\wtilde{\mu}}
(\delta) e_i, e_0\rangle \langle \what{(f^n)^\ast}(\delta)e_j,
e_0\rangle=0.\] Since there exists $\delta\in \what{G}_K$ and
$e_i\in V_\delta$ such that $\langle\what{\wtilde{\mu}} (\delta)
e_i, e_0\rangle\neq 0$, $\langle \what{(f^n)}(\delta)e_0,
e_j\rangle=0$ for all $e_j\in V_\delta$ and $n\in \N$.

Thus it is enough to show that for some $n\in \N$ and $e_j\in
V_\delta$,  $\langle \what{(f^n)}(\delta)e_0, e_j\rangle\neq 0$. We
note that for some $e_j\in V_\delta$, $\langle \delta(x) e_0,
e_j\rangle\neq 0$. We fix that $e_j$ and  define $\psi(x)=\langle
\delta(x) e_0, e_j\rangle$ on $G$ and apply Lemma \ref{meta-result}
with this $\psi$ to complete the proof.
\end{proof}

Next we shall take up the case of Riemannian symmetric spaces of
noncompact type. The notation we shall use here is standard and can
be found in \cite{Helga-GGA}. We shall mention here only those which
are required to describe the result. Let $G$ be connected noncompact
semisimple Lie group with finite centre and $K$ be a maximal compact
subgroup of $G$. Then the homogenous space $X=G/K$ is a Riemannian
symmetric space of noncompact type. Three other important subgroups
of $G$ are $A$ which is isomorphic with $\R^n$, $N$ which is a
nilpotent group and $M$ which is the centralizer of $A$ in $K$.
Every element $g\in G$ can be uniquely written as $g=k(g)H(g)n(g)$
where $k(g)\in K$, $H(g)\in \mathfrak a$, the Lie algebra of $A$ and
$n(g)\in N$. In this note we shall deal only with rank one symmetric
spaces of noncompact type, i.e. $\dim \mathfrak a=1$. Indeed we
shall identify $\mathfrak a$ with $\R$. The group $G$ acts on $X$
naturally by left translation. The functions on $X$ are identified
with right $K$-invariant functions on $G$ and vice-versa. There
exists a $G$-invariant measure on $X$ with respect to which we
consider the $L^p$-spaces on $X$. Let $\rho$ be the half-sum of the
positive roots  which will be considered as a scaler as $\dim
\mathfrak a=1$. For a function $f\in C_c(X)$  we define its Helgason
Fourier transform $\wtilde{f}(\lambda, k)$ for $k\in K$ and
$\lambda\in \C$ by (see \cite{Helga-GGA}),
\[\wtilde{f}(\lambda, k)=\int_G f(x) e^{-(i\lambda+\rho)H(x^{-1}k)}dx.\]

\begin{theorem}\label{noncom-sym}
Let $X=G/K$ be a rank one Riemannian symmetric space of noncompact
type and $f\in C_c(X)$ be a real valued function which has a unique
maximum or a unique minimum at $x_0\in X$. Then $\mathrm{span
}\{\ell_x f^n\mid n\in \N, x\in G\}$ is dense in $L^1(X)$.
\end{theorem}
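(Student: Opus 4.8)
The plan is to reduce the statement, as in the previous cases, to a Wiener-type Tauberian theorem together with an application of Lemma~\ref{meta-result}. For a rank one $X$ one has at one's disposal an $L^1$-analogue of the Wiener Tauberian theorem (in the same spirit as the version for $\mathrm{SL}(2,\R)$ used above): for a family $\{g_\alpha\}\subset C_c(X)$ the subspace $\mathrm{span}\{\ell_x g_\alpha\mid x\in G\}$ is dense in $L^1(X)$ if and only if the Helgason Fourier transforms $\wtilde{g_\alpha}(\lambda,k)$ have no common zero as $(\lambda,k)$ ranges over the closed strip $\{\lambda\in\C\mid|\Im\lambda|\le\rho\}$ times $K$; the residual (Tauberian) hypotheses of that theorem are automatically satisfied here since each $g_\alpha$ is continuous with compact support. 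Hence it suffices to show that the functions $f^n$, $n\in\N$, have no common zero in this sense.

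Now fix $\lambda\in\C$ with $|\Im\lambda|\le\rho$ and fix $k\in K$, and set $\psi(x)=e^{-(i\lambda+\rho)H(x^{-1}k)}$. The left $K$-invariance of the Iwasawa projection $g\mapsto H(g)$ shows that $g\mapsto H(g^{-1}k)$ is constant on the cosets $gK$, so $\psi$ descends to a continuous function on $X$, and $\wtilde{f^n}(\lambda,k)=\int_X f^n(x)\psi(x)\,d\mu(x)$, where $\mu$ is the $G$-invariant measure on $X$. Being an exponential, $\psi$ vanishes nowhere; in particular $\psi(x_0)\neq0$. Since $\mu$ is a regular nonatomic Borel measure, $X$ is a manifold (so $x_0$ is not an isolated point), and $f\in C_c(X)$ has a unique maximum or minimum at $x_0$, Lemma~\ref{meta-result} applies with this $\psi$ and yields an integer $n=n(\lambda,k)\in\N$ with $\wtilde{f^n}(\lambda,k)=\int_X f^n(x)\psi(x)\,d\mu(x)\neq0$. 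Thus the family $\{\wtilde{f^n}(\lambda,\cdot)\mid n\in\N\}$ has no common zero on the closed strip, and the Wiener Tauberian theorem quoted above gives that $\mathrm{span}\{\ell_x f^n\mid n\in\N,\ x\in G\}$ is dense in $L^1(X)$.

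The main obstacle is not the concluding step: the application of Lemma~\ref{meta-result} is in fact easier here than in the compact cases, because the relevant test function $\psi$ is an exponential and is therefore automatically nonzero at $x_0$, so no search for a suitable matrix coefficient is required. The delicate point is rather to have available a form of the Wiener Tauberian theorem for $L^1(X)$ whose hypotheses are weak enough to be met by the family $\{f^n\}\subset C_c(X)$: beyond non-vanishing of the Helgason transforms on the closed strip $\{|\Im\lambda|\le\rho\}$ one must control the behaviour on the boundary lines $\Im\lambda=\pm\rho$ (a Beurling-type non-quasianalyticity condition on the boundary values), and it is precisely the continuity and compact support of the $f^n$ that guarantee this. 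This, together with the fact that the full strip $\{|\Im\lambda|\le\rho\}$ is the natural region governing the $L^1$ theory, is also the reason for restricting to the rank one setting, where the required Tauberian theorem is available in the form used above.
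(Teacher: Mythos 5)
Your proposal follows essentially the same route as the paper: reduce to a Wiener Tauberian theorem for $L^1(X)$ (the paper cites \cite[Theorem 5.5]{MRSS}, verifying non-vanishing on a strip $\{|\Im z|<\rho+\varepsilon\}$ together with the non-analyticity condition, which compact support gives for free) and then obtain the non-vanishing of some $\wtilde{f^n}(\lambda,k)$ by applying Lemma \ref{meta-result} with $\psi(x)=e^{-(i\lambda+\rho)H(x^{-1}k)}$, which is nowhere zero. The only differences are cosmetic (closed strip plus boundary condition versus slightly enlarged open strip), so the argument is correct and matches the paper's.
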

\begin{proof} Since $f\in C_c(X)$ it is clear that
$\wtilde{f}(\lambda, k)$ is a continuous function in  $(\lambda,
k)\in\C\times K/M$ and for any fixed $k\in K/M$, $\lambda\mapsto
\wtilde{f}(\lambda, k)$ extends holomorphically to $\C$ and in
particular to $S_1^\varepsilon=\{z \in \C\mid |\Im
z|<\rho+\varepsilon\}$ for any $\varepsilon>0$. We fix a
$\varepsilon>0$. In view of \cite[Theorem 5.5]{MRSS} we need to
verify that
\begin{enumerate}
\item[(i)] for any $\lambda\in S_1^\varepsilon$, there exists $k\in K$ such that $\wtilde{f^n}(\lambda, k)\neq 0$ for some $n\in
\N$ and
\item[(ii)] $f$ is not real analytic.
\end{enumerate}
Condition (ii) is obvious as $f$ is compactly supported.

Since  $e^{(i\lambda+\rho)H(x_0^{-1}k)}\neq 0$ for any $k\in K/M$,
we fix a $k\in K/M$  and consider this as
$\psi(x)=e^{(i\lambda+\rho)H(x_0^{-1}k)}$ in the hypothesis of Lemma
\ref{meta-result}. This takes care of (i).
\end{proof}
An analogous result can be proved for $L^p(X), 1<p<2$ using
\cite[Theorem 5.7]{MRSS}, which we omit for brevity.
\section{Examples and Comments}
We shall consider three examples in three different set up where the
functions attain their maximum at more than one points.

(1) We take a function  $f_1\in C_c(\R^n)$. Let $f_2$ be a
translation of $f_1$ by $a\in \R^n$ so that support of $f_1$ and
$f_2$ are disjoint. Let $f=f_1+f_2$. Then $f\in C_c(\R^n)$ has
non-unique maximum and minimum. Note that for all $n\in \N$, $f^n=
f_1^n+f_2^n$, $f_2^n$ is the translation of $f_1^n$ by $a$ and
$\what{f^n}(\xi)=\what{f_1^n}(\xi) + e^{i\xi\cdot
a}\what{f_1^n}(\xi)$. We take  a $\xi$ so that $e^{i\xi\cdot a}=-1$.
Then $\what{f^n}(\xi)=0$ for all $n\in \N$. Hence $\mathrm{span
}\{\ell_x f^n\mid n\in \N, x\in \R^n\}$ is not dense in $L^1(\R^n)$.

(2) Let $X=G/K$ be a Riemannian symmetric space of noncompact type.
We take a function $f\in C_c(G/K)$ which has unique maximum at the
point $x_0$. By Theorem \ref{noncom-sym}, $\{f^n\mid n\in \N\}$
generates  $L^1(G/K)$. But $x_0$ considered as a point on $G/K$ is
actually a coset $x_0=gK$ for some $g\in G$. Thus considered as a
function of $G$, $f$  has maximum at all  points of the orbit
$\{gk\mid k\in K\}$. If for a $\pi\in \what{G}$, $\pi|_K$ does not
contain the trivial representation of $K$, then $\what{f^n}(\pi)=0$
for all $n\in \N$. Therefore the closure of $\mathrm{span }\{\ell_x
R_y f^n\mid n\in\N, x, y\in G\}$ contains only the functions whose
Fourier transforms vanish at $\pi$.

(3) We take $G=\mathrm{SL}(2, \R)$. Let $I$ be the  $2\times 2$
identity matrix. Suppose that $f\in C_c(G)$ satisfies $f(x)=f(-x)$
for all $x\in G$ where  $-x=-I x$ and attains its maximum at $\pm
x_0$. Then for any $n\in \N$, $f^n$ can be considered as a function
of $\mathrm{SL}(2, \R)/\{\pm I\}$ and the closure in $L^1(G)$ of the
span of translates of $f^n$ contains only  those  functions which
take same value at $\pm x$.  Thus $\mathrm{span }\{\ell_x R_y
f^n\mid n\in \N, x, y\in G\}$ cannot be dense in  $L^1(G)$. On the
other hand if  $f$ is considered as a function of $\mathrm{SL}(2,
\R)/\{\pm I\}$, then it has a unique maximum and  argument similar
to that of subsection 2.3 shows that $\{f^n\mid n\in \N\}$ generates
$L^1(\mathrm{SL}(2, \R)/\{\pm I\})$.

\end{document}